\theoremstyle{plain}
\newtheorem{theorem}{Theorem}[section]
\newtheorem{lemma}[theorem]{Lemma}
\newtheorem{corollary}[theorem]{Corollary}
\theoremstyle{definition}
\newtheorem{definition}[theorem]{Definition}
\theoremstyle{remark}
\newtheorem*{acknowledgements}{Acknowledgements}
\newcommand{\del}{\partial}
\newcommand{\R}{\mathbb{R}}
\newcommand{\N}{\mathbb{N}}
\renewcommand{\H}{\mathbb{H}}
\renewcommand{\S}{\mathbb{S}}
\newcommand{\E}{\mathbb{E}}
\begin{document}

\title{Geometric bistellar moves relate geometric triangulations}

\author[Kalelkar]{Tejas Kalelkar}
\address{Mathematics Department, Indian Institute of Science Education and Research, Pune 411008, India}
\email{tejas@iiserpune.ac.in}

\author[Phanse]{Advait Phanse}
\address{Mathematics Department, Indian Institute of Science Education and Research, Pune 411008, India}
\email{advait.phanse@students.iiserpune.ac.in}

\date{\today}

\keywords{Hauptvermutung, geometric triangulation, bistellar moves, flip graph, combinatorial topology}

\subjclass[2010]{Primary 57Q25, 57Q20}

\begin{abstract}
A geometric triangulation of a Riemannian manifold is a triangulation where the interior of each simplex is totally geodesic. Bistellar moves are local changes to the triangulation which are higher dimensional versions of the flip operation of triangulations in a plane. We show that geometric triangulations of a compact hyperbolic, spherical or Euclidean manifold are connected by geometric bistellar moves (possibly adding or removing vertices), after taking sufficiently many derived subdivisions. For dimensions 2 and 3, we show that geometric triangulations of such manifolds are directly related by geometric bistellar moves (without having to take derived subdivision).
\end{abstract}

\maketitle

\section{Introduction and Notation}\label{intro}
If we do not allow adding or removing vertices, it is known that the flip graph of Euclidean triangulations of 2-dimensional polytopes is connected. This forms the basis for the Lawson edge flip algorithm to obtain a Delaunay triangulation. For 5-dimensional polytopes, Santos \cite{San} has given examples of triangulated polytopes with disconnected flip graphs. The problem of connectedness of such a flip graph for 3-dimensional polytopes is still open.

In this article we show that if we allow bistellar moves which add or remove vertices then the flip graph is connected in dimension 2 and 3 not just for polytopes but also for geometric triangulations of any compact Euclidean, spherical or hyperbolic manifold. This can be used to show that a quantity calculated from a geometric triangulation which is invariant under geometric bistellar moves is in fact an invariant of the manifold. Furthermore in dimension greater than 3, we show that any two such triangulations are related by bistellar moves after taking suitably many derived subdivisions.

In \cite{KalPha} we give an alternate approach using shellings to relate geometric triangulations via topological bistellar moves (so intermediate triangulations may not be  geometric). This gives a tighter bound on the number of such flips required to relate  the geometric triangulations, than the one which can be  calculated from the algorithm in this article. 

Note that a foundational theorem of Pachner\cite{Pac} states that PL triangulations of PL homeomorphic manifolds are related by topological bistellar moves. Whitehead\cite{Whi} has shown that smooth triangulations of diffeomorphic smooth manifold are related by smooth bistellar moves. In this context, our result shows that geometric triangulations of isometric constant curvature manifolds are related by geometric bistellar moves up to derived subdivisions.

A \emph{geometric triangulation} of a Riemannian manifold is a finite triangulation where the interior of each simplex is a totally geodesic disk. Every constant curvature manifold has a geometric triangulation. In the converse direction, Cartan has shown that if for every point $p$ in a Riemannian manifold $M$ and every subspace $V$ of $T_p M$ there exists a totally geodesic submanifold $S$ through $p$ with $T_pS = V$, then $M$ must have constant curvature; which seems to suggest that the only manifolds which have many geometric triangulations are the constant curvature ones. A \emph{common subcomplex} of simplicial triangulations $K_1$ and $K_2$ of $M$ is a simplicial complex structure $L$ on a subspace of $M$ such that $K_1|_{|L|}= K_2|_{|L|}=L$. The main result in this article is the following:
\begin{theorem}\label{mainthm1}
Let $K_1$ and $K_2$ be geometric simplicial triangulations of a compact constant curvature manifold $M$ with a (possibly empty) common subcomplex $L$ with $|L| \supset \del M$. When $M$ is spherical we assume that the diameter of the star of each simplex is less than $\pi$. Then for some $s\in\N$, the $s$-th derived subdivisions $\beta^s K_1$ and $\beta^s K_2$ are related by geometric bistellar moves which keep $\beta^s L$ fixed.
\end{theorem}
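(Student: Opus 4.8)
The plan is to reduce Theorem \ref{mainthm1} to the special case in which one triangulation is a geometric subdivision of the other, and then to realize such a subdivision by geometric bistellar moves once all simplices are small enough to behave like Euclidean ones. Concretely, I would isolate the following Key Lemma: \emph{if $C$ is a geometric subdivision of a geometric triangulation $K$ with $C|_{|L|}=K|_{|L|}=L$, then for all sufficiently large $s\in\N$ the complexes $\beta^s K$ and $\beta^s C$ are related by geometric bistellar moves fixing $\beta^s L$.} Granting this, I would produce a single common geometric subdivision $C$ of $K_1$ and $K_2$, apply the Key Lemma once to $(K_1,C)$ and once to $(K_2,C)$, choose one exponent $s$ large enough for both applications, and concatenate the two move-sequences at $\beta^s C$. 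Since $|L|\supset\del M$, keeping $\beta^s L$ fixed in particular never disturbs the boundary.

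First I would build the common subdivision. Consider the overlay of $K_1$ and $K_2$, namely the collection of all nonempty intersections $\sigma_1\cap\sigma_2$ with $\sigma_1\in K_1$ and $\sigma_2\in K_2$. Because $M$ has constant curvature, each such intersection is the intersection of two totally geodesic convex simplices and is therefore itself a convex geodesic polytope; in the spherical case this is exactly where the hypothesis $\diam<\pi$ on the stars is used, guaranteeing that the simplices involved lie in a convex ball so that their intersections remain convex. These cells assemble into a geometric polyhedral subdivision $P$ refining both $K_1$ and $K_2$, and since $K_1$ and $K_2$ already agree on $|L|$, no new cells are created there. I would then triangulate $P$ without new vertices by the pulling construction: fix a total order on the vertices of $P$ extending an order on the vertices of $L$, and cone each cell from its smallest vertex. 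This yields a geometric simplicial complex $C$ subdividing both $K_1$ and $K_2$ with $C|_{|L|}=L$, because pulling reproduces an already simplicial subcomplex.

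To prove the Key Lemma I would exploit bounded geometry. After $s$ derived subdivisions every simplex of $\beta^s K$ and of $\beta^s C$ has diameter below the injectivity radius of $M$ and below a scale on which the metric is $C^1$-close to the Euclidean metric on $\R^n$, so that convexity, general position, and the validity of a proposed move can all be verified inside a single convex chart. Proceeding by induction up the skeleta of $K$ and fixing the already-matched lower-dimensional part, I would show that over each simplex the two small induced subdivisions can be carried to one another by geometric bistellar moves that leave its boundary untouched, using that at small scale a geometric stellar subdivision, and its inverse, becomes a geometric bistellar move after one further derived subdivision. Summing the local counts and gluing along shared faces is then consistent precisely because the boundary subdivisions were matched at the previous inductive stage. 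The combinatorial existence of such move-sequences is guaranteed abstractly by Pachner's theorem \cite{Pac}; the work lies entirely in the geometric realization.

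The hard part is the Key Lemma, and within it the simultaneous maintenance of two constraints on every intermediate complex: that each move be a genuine bistellar move, that is, the correct bipyramid split of a combinatorial sphere, and that every simplex it produces be totally geodesic with the local configuration convex. It is exactly the tension between these two demands that forces the passage to derived subdivisions: the threshold $s_0$ cannot be bounded a priori and must be chosen in terms of the injectivity radius of $M$, the dihedral angles occurring in $K_1$, $K_2$ and $C$, and, in the spherical case, how far the star diameters lie below $\pi$. Finally, keeping $\beta^s L$ fixed pointwise rather than merely setwise requires that no local move ever select a vertex or face lying on $|L|$ for restructuring, which I would arrange by performing every starring from an interior point of the simplex being subdivided.
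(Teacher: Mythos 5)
Your global strategy matches the paper's: reduce to a single common geometric subdivision (the paper takes the polytopal overlay $C$ of the simplexes of $K_1$ and $K_2$ and then its relative derived subdivision $\beta_L C$, where you use a pulling triangulation --- both are fine) and then prove a Key Lemma relating a triangulation to any geometric subdivision of it by geometric bistellar moves after derived subdivisions; this is exactly Lemma \ref{locallyflatlemma}. The gap is that you never actually prove the Key Lemma, and the two ideas you offer in its place do not work. First, the assertion that ``at small scale a geometric stellar subdivision, and its inverse, becomes a geometric bistellar move after one further derived subdivision'' is unsubstantiated; realizing internal stellar moves geometrically by bistellar moves is known only in dimensions $2$ and $3$ (Lemma 2.11 of \cite{IzmSch}, which is why the paper's corollary is confined to those dimensions), and establishing something of this kind in all dimensions is essentially the content of the theorem you are trying to prove. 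Second, the heuristic ``after enough derived subdivisions the metric is $C^1$-close to Euclidean, so moves can be verified in a chart'' misdiagnoses the difficulty: via the Klein model, gnomonic projection, or a Euclidean lift, the star of any simplex is already \emph{exactly} isomorphic to a linear triangulation of a star-convex flat polyhedron in $\R^n$, by a map taking geodesics to straight lines (Lemma \ref{convex}); no approximation or smallness is needed beyond $\diam < \pi$ in the spherical case. The obstruction is not metric distortion but geometric realizability of combinatorial moves: Pachner's theorem produces combinatorial bistellar sequences that need not be geometric (Figure \ref{combimove}), and even in exactly Euclidean geometry flip graphs can be disconnected (Santos \cite{San}), so ``nearly Euclidean'' buys nothing.

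What fills the gap in the paper is a star-convex, boundary-relative version of the Morelli--Wlodarczyk cobordism argument (Theorem \ref{starconvexthm}): cone the star-convex polyhedron to a point $a \in \R^{n+1}$, use Zeeman's subdivision lemma together with Adiprasito--Izmestiev to find $s$ making $\beta^s$ of the cone a \emph{regular} triangulation (Lemma \ref{regular}) --- this is the actual reason derived subdivisions enter the statement, not smallness --- and then delete top-dimensional simplexes of the cone in non-increasing order of the vertical derivative of the regular function; each deletion projects to a geometric bistellar move on the upper boundary, sweeping $\beta^s K_i$ down to the common cone triangulation of the fixed boundary. The paper then applies this theorem to the stars $st(A, \beta^\alpha_r K)$, descending over the dimension $r$ via the interpolating subdivisions $\beta^\alpha_r K$, which is the precise form your ``induction up the skeleta'' must take so that the stars being processed are strictly star-convex. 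Without some such mechanism, your proposal reduces the theorem to an unproved claim that is at least as strong as the theorem itself.
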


We call a $\Delta$-complex $K$ the geometric triangulation of a cusped hyperbolic manifold $M$ if for some subset $V'$ of the set of vertices of $K$, $M=|K| \setminus |V'|$ and the interior of each simplex of $K$ is a totally geodesic disk in $M$. Cusped finite volume hyperbolic manifolds have canonical ideal polyhedral decompositions \cite{EpsPen}. Further dividing this polyhedral decomposition into ideal tetrahedra without introducing new vertices may result in degenerate flat tetrahedra. If however we allow genuine vertices, simply taking a derived subdivision of this polyhedral decomposition gives a geometric triangulation for any cusped manifold. For cusped manifolds we have the following weaker result:

\begin{theorem}\label{mainthm2}
Let $K_1$ and $K_2$ be geometric simplicial triangulations of a cusped hyperbolic manifold which have a common geometric subdivision. Then for some $s\in \N$, the $s$-th derived subdivisions $\beta^s K_1$ and $\beta^s K_2$ are related by geometric bistellar moves.
\end{theorem}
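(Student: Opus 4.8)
The plan is to reduce Theorem~\ref{mainthm2} to a single subdivision lemma, which I would isolate as the engine of the whole argument: \emph{a geometric triangulation and any geometric subdivision of it become related by geometric bistellar moves after finitely many derived subdivisions}. Granting that lemma the deduction is short. By hypothesis $K_1$ and $K_2$ admit a common geometric subdivision $K$, so I would apply the lemma twice, to the pair $(K_1,K)$ and to the pair $(K_2,K)$, producing integers $s_1,s_2$ with $\beta^{s_1}K_1$ related to $\beta^{s_1}K$ and $\beta^{s_2}K_2$ related to $\beta^{s_2}K$ by geometric bistellar moves. To align the two conclusions I would use a stability step: if two geometric triangulations are related by geometric bistellar moves, then so are their derived subdivisions. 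This lets me push everything up to $s=\max(s_1,s_2)$ and concatenate, giving $\beta^{s}K_1 \sim \beta^{s}K \sim \beta^{s}K_2$, which is exactly the desired conclusion.

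The substance is therefore in the subdivision lemma, which I would attack locally, one simplex of $K_i$ at a time. For a simplex $\sigma$ of $K_i$, the restriction $K|_\sigma$ is a geometric triangulation of the geodesically convex simplex $\sigma$ that agrees with $K_i$ on $\del\sigma$, and the task is to move $K|_\sigma$ to the derived subdivision of $\sigma$ by geometric bistellar moves supported in $\sigma$, after first taking enough derived subdivisions. The point of passing to $\beta^s$ is metric: once every simplex has small diameter, the exponential map makes the geometry on each such geodesically convex region uniformly close to affine, so that the convex-position and flip arguments that establish flip-connectivity inside a single simplex can be run with geodesic simplices in place of Euclidean ones. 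I would organise an induction over the skeleton, relating the subdivisions on the $k$-skeleton before crossing into the interiors of the $(k+1)$-simplices, so that moves performed inside one simplex never disturb its neighbours and the boundary identifications are preserved.

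The hard part, and the reason the cusped case is only a conditional statement, is the behaviour near the cusps, i.e. at the ideal vertices in $V'$. There the simplices of $K_i$ are ideal and noncompact, so ``small diameter after $\beta^s$'' fails in the cusp direction and the near-affine approximation above degenerates as one runs out toward an ideal vertex. I would handle this by working throughout in the open manifold $M=|K|\setminus|V'|$ and choosing each derived subdivision to star at genuine interior points rather than at points at infinity, so that every move and every intermediate triangulation remains a geodesic $\Delta$-complex with the \emph{same} ideal vertices; the geodesic simplices incident to a cusp keep their ideal apex fixed, and the bistellar moves are confined to the finite part of each ideal simplex. The main obstacle is to verify that this finite-part argument terminates and yields a single $s$ that works simultaneously for all of the finitely many simplices, ideal ones included, while keeping every intermediate simplex nondegenerate. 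This is precisely where the hypothesis of an existing common geometric subdivision is doing the work: it supplies the target triangulation $K$ without forcing us to construct one by overlaying two ideal triangulations across a cusp, an overlay which need not be convex or geometric and which is what obstructs removing the hypothesis altogether.
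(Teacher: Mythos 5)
Your top-level architecture is the paper's own: take the given common geometric subdivision $K$, isolate a lemma saying that a geometric triangulation and any geometric subdivision of it become bistellar equivalent after enough derived subdivisions (this is the paper's Lemma \ref{locallyflatlemma}), and apply it to the pairs $(K_1,K)$ and $(K_2,K)$. But your proposed proof of that lemma has genuine gaps. First, the skeleton induction does not produce bistellar moves of the manifold triangulation: a flip performed inside a $k$-simplex $\sigma$ of $K_i$ is a $k$-dimensional move, not an $n$-dimensional one, and changing the subdivision of the face $\sigma$ forces a simultaneous retriangulation of the entire $n$-dimensional star of $\sigma$. Those stars are star-shaped but in general not convex --- the paper notes, citing Theorem 4(c) of \cite{AdiIzm}, that in dimension $\geq 5$ they need not even be combinatorially isomorphic to convex polytopes --- so the convex ``flip-connectivity inside a single simplex'' results you invoke do not suffice. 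The paper's substitute is precisely the machinery your sketch is missing: the interpolating subdivisions $\beta^\alpha_r K$, which cone off every simplex of dimension $>r$ so that passing from $\beta^\alpha_r K$ to $\beta^\alpha_{r-1}K$ amounts to replacing the triangulated star of each $r$-simplex by the cone on its boundary, together with Theorem \ref{starconvexthm}, a boundary-relative flip theorem for \emph{star-convex} flat polyhedra proved via regular triangulations and the Morelli--Wlodarczyk sweep.

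Second, the metric ingredient is the wrong tool, and this is exactly what sinks your cusp treatment. One does not need (and near a cusp cannot get) ``uniformly close to affine'' via small simplexes; what the paper uses is that constant curvature gives \emph{exact} geodesic-to-straight-line charts on stars (Lemma \ref{convex}): the Klein model for hyperbolic manifolds, gnomonic projection for spherical ones. In the Klein model an ideal star is a bounded Euclidean star-convex polyhedron, merely with some vertices on the sphere at infinity, so once the ideal vertices are placed in the fixed subcomplex $L$ the compact Euclidean theory applies verbatim to cusped manifolds: there is no ``finite part'' truncation, no degeneration toward the cusp, and no termination issue to verify. Your cusp paragraph lists these difficulties but does not resolve them. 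Finally, your stability step --- that bistellar equivalence of two triangulations implies bistellar equivalence of their derived subdivisions --- is unjustified and essentially as hard as the theorem itself: the derived subdivisions of the two sides of a single flip agree only on the boundary sphere of the flipped ball, so relating them is again the boundary-relative problem, which the machinery solves only after \emph{further} derived subdivisions. The paper never needs such a statement: its integer $s$ is whatever makes the relevant cone triangulations regular (Lemma \ref{regular}), and since regularity persists under further derived subdivision, any larger $s$ works automatically, so the two applications of Lemma \ref{locallyflatlemma} can share a single $s$ without any stability meta-lemma.
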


In recent work \cite{KalRag}, we have shown that geometric triangulations of cusped hyperbolic 3-manifolds do in fact have a common geometric subdivision with a bounded number of simplexes and that such triangulations are related by a bounded number of geometric bistellar moves.

In dimension 2 and 3, every internal stellar move can be realised by geometric bistellar moves (see for example Lemma 2.11 of \cite{IzmSch}), so we get the following immediate corollary:
\begin{corollary}
Let $K_1$ and $K_2$ be geometric simplicial triangulations of a closed constant curvature 3-manifold $M$. When $M$ is spherical we assume that the diameter of the star of each simplex is less than $\pi$. Then $K_1$ is related to $K_2$ by geometric bistellar moves.
\end{corollary}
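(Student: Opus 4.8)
The plan is to obtain the corollary as a short combination of Theorem~\ref{mainthm1} with the cited fact (Lemma 2.11 of \cite{IzmSch}) that in dimension $3$ every internal stellar move is realized by a sequence of geometric bistellar moves. Since $M$ is closed, $\del M = \emptyset$, so the empty complex satisfies the hypothesis $|L| \supset \del M$ and Theorem~\ref{mainthm1} applies with $L = \emptyset$. This gives an $s \in \N$ for which $\beta^s K_1$ and $\beta^s K_2$ are related by geometric bistellar moves. It then remains only to relate each $K_i$ to its $s$-th derived subdivision $\beta^s K_i$ by such moves, after which concatenating
\[
K_1 \longrightarrow \beta^s K_1 \longrightarrow \beta^s K_2 \longrightarrow K_2
\]
finishes the proof; the last arrow is the $K_2$-chain run backwards, which is allowed because each geometric bistellar move is inverted by a geometric bistellar move.

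To relate $K_i$ to $\beta^s K_i$ I would exhibit the derived subdivision as a composition of internal stellar moves. A derived subdivision $\beta K$ is obtained by starring the simplices of $K$ at chosen interior points (their barycenters) in order of decreasing dimension; each such starring of a simplex of positive dimension is an internal stellar move, as the starring point lies in the interior of the subdivided simplex and $M$ has empty boundary. Because $M$ is a closed $3$-manifold, Lemma 2.11 of \cite{IzmSch} realizes each of these internal stellar moves by geometric bistellar moves, so composing over all the starrings relates $K_i$ to $\beta K_i$, and iterating $s$ times relates $K_i$ to $\beta^s K_i$.

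The one point that needs care is that the intermediate complexes remain geometric. The barycenter of a totally geodesic simplex is a well-defined interior point, and the cone from it onto any boundary face lies inside the (totally geodesic) simplex, so every starring stays within the class of geometric triangulations; in the Euclidean and hyperbolic settings this is unconditional, while in the spherical case the standing hypothesis that each star has diameter less than $\pi$ guarantees the required geodesics are unique and is only preserved (indeed strengthened) as the simplices shrink under subdivision. I expect no essential geometric difficulty here: the analytic heart of the argument is already contained in Theorem~\ref{mainthm1} and in Lemma 2.11 of \cite{IzmSch}, and the remaining work is the bookkeeping of checking that the prescribed starrings assemble into $\beta K_i$ and that the cited lemma applies to each one.
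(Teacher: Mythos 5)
Your proposal is correct and follows exactly the route the paper intends: apply Theorem~\ref{mainthm1} with $L=\emptyset$ (valid since $\del M=\emptyset$), then use the fact that in dimension $3$ each internal stellar subdivision making up a derived subdivision is realized by geometric bistellar moves (Lemma 2.11 of \cite{IzmSch}) to connect each $K_i$ to $\beta^s K_i$, and concatenate using invertibility of bistellar moves. The paper treats this as an immediate corollary with precisely this reasoning, so your write-up simply supplies the bookkeeping the paper leaves implicit.
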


An \emph{abstract simplicial complex} consists of a finite set $K^0$ (the vertices) and a family $K$ of subsets of $K^0$ (the simplexes) such that if $B \subset A \in K$ then $B \in K$. A \emph{simplicial isomorphism} between simplicial complexes is a bijection between their vertices which induces a bijection between their simplexes. A \emph{realisation} of a simplicial complex $K$ is a subspace $|K|$ of some $\R^N$, where $K^0$  is represented by a finite subset of $\R^N$ and vertices of each simplex are in general position and represented by the linear simplex which is their convex hull. Every simplicial complex has a realisation in $\R^{N}$ where $N$ is the size of $K_0$, by representing $K_0$ as a basis of $\R^N$. Any two realisations of a simplicial complex are simplicially isomorphic. For $A$ a simplex of $K$, we denote by $\del A$ the \emph{boundary complex} of $A$. When the context is clear, we shall use the same symbol $A$ to denote the simplex and the simplicial complex $A \cup \del A$. We call $K$ a \emph{simplicial triangulation} of a manifold $M$ if there exists a homeomorphism from a realisation $|K|$ of $K$ to $M$. The simplexes of this triangulation are the images of simplexes of $|K|$ under this homeomorphism. The books by Rourke and Sanderson\cite{RouSan} and Ziegler\cite{Zie2} are good sources of introduction to the theory of piecewise linear topology. 

\begin{definition}
For $A$ and $B$ simplexes of a simplicial complex $K$, we denote their join $A\star B$ as the simplex $A \cup B$. As the join of totally geodesic disks in a constant curvature manifold gives a totally geodesic disk, operations involving joins are well-defined in the class of geometric triangulations of a constant curvature manifold.

The link of a simplex $A$ in a simplicial complex $K$ is the simplicial complex defined by $lk (A, K) = \{ B \in K : A \star B \in K \}$. The (closed) star of $A$ in $K$ is the simplicial complex defined by $st(A, K)=A \star lk (A, K)$. 
\end{definition}

\begin{definition}
Suppose that $A$ is an $r$-simplex in a simplicial complex $K$ of dimension $n$ then a stellar subdivision on $A$ gives the geometric triangulation $(A, a) K$ by replacing $st(A, K)$ with $a \star \del A \star lk(A, K)$ for $a\in int(A)$. The inverse of this operation $(A, a)^{-1} K$ is called a stellar weld and they both are together called stellar moves.

When $lk(A, K)=\del B$ for some $(n-r)$-dimensional geometric simplex $B \notin K$, then the Pachner or bistellar move $\kappa(A, B)$ consists of changing $K$ by replacing $A \star \del B$ with $\del A \star B$. Note that the bistellar move $\kappa(A, B)$ is the composition of a stellar subdivision and a stellar weld, namely $(B, a)^{-1} (A, a)$. Another way of defining this is to take a disk subcomplex $D$ of $K$ which is simplicially isomorphic to a disk $D'$ in $\del \Delta^{n+1}$ and the flip consists of replacing it with the disk  $\del \Delta^{n+1} \setminus int(D')$.

The derived subdivision $\beta K$ of $K$ is obtained from $K$ by performing a stellar subdivision on all $r$-simplexes, and ranging $r$ inductively from $n$ down to 1.
\end{definition}

All stellar and bistellar moves  we consider are geometric in nature. Not every combinatorial bistellar move in a geometric manifold can be expressed by geometric bistellar moves (see Fig \ref{combimove}). For details of geometric bistellar moves see Santos \cite{San}. Among cusped hyperbolic manifolds, it is remarked in \cite{DadDua} that the Figure Eight knot complement has geometric ideal triangulations which can not be related by geometric bistellar moves which do not introduce genuine vertices.

\begin{figure}
\begin{center}
\includegraphics[width=0.5\textwidth]{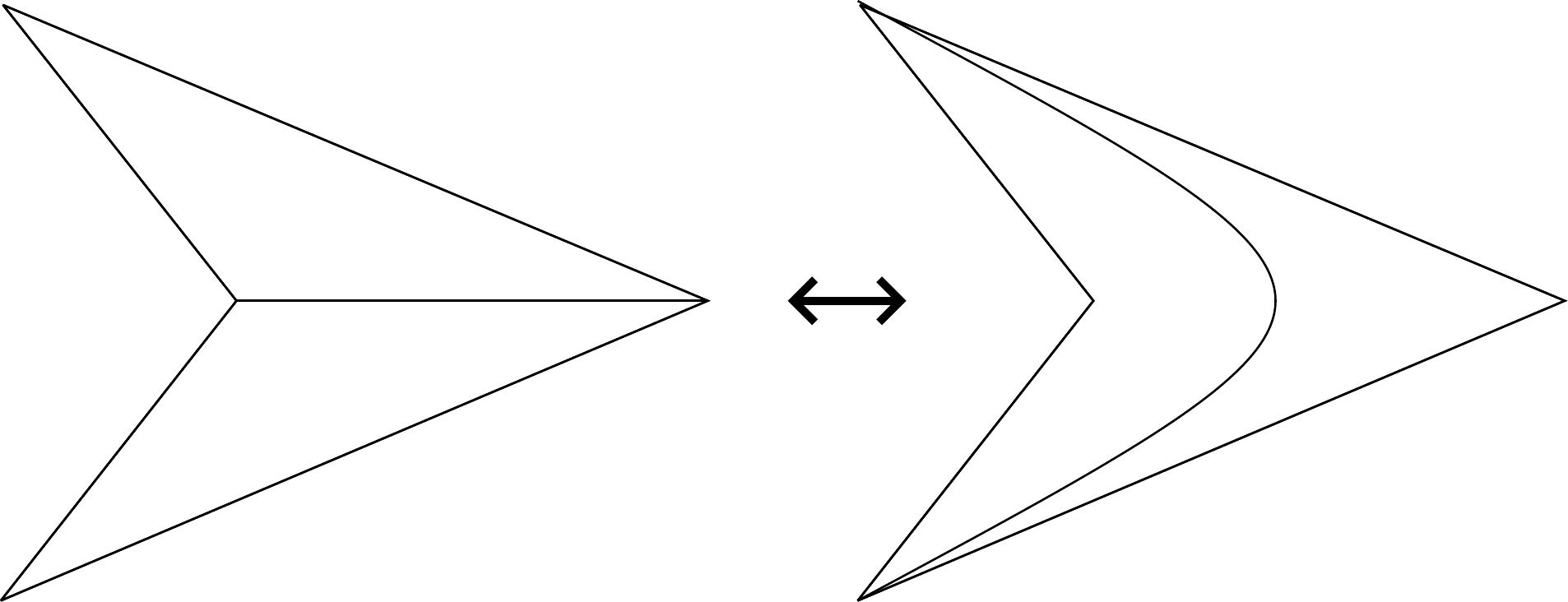}
\caption{A 2-2 combinatorial bistellar move which is not geometric.}\label{combimove}
\end{center}
\end{figure}

It is known that linear triangulations of a convex polytope $P \subset \R^N$ are related by stellar moves\cite{Mor}\cite{Wlo}. Our proof involves a straightforward extension of this result to star-convex polytopes and bistellar moves.

As the supports in $\R^N$ of two triangulations of a manifold may be different so when the manifold is not a polytope we can not take a linear cobordism between them. A subtle point here is that even if we obtain a common geometric refinement of two geometric triangulations, the refinement may not be a simplicial subdivision of the corresponding simplicial complexes. To see a topological refinement which is not a simplicial subdivision, observe that there exists a simplicial triangulation $K$ of a 3-simplex $\Delta$ which contains in its 1-skeleton a trefoil with just 3 edges \cite{Lic2}. If $K$ were a simplicial subdivision of $\Delta$ there would exist a linear embedding of $\Delta$ in some $\R^N$ which takes simplexes of $K$ to linear simplexes in $\R^N$. As the stick number of a trefoil is 6, there can exist no such embedding. While there may not exist such a global embedding of a geometric triangulation $K$ as a simplicial complex in $\R^N$ which takes geometric subdivisions to linear subdivisions, for constant curvature manifolds there does exist such a local embedding on stars of simplexes of $K$. This is the property we exploit in this note.

\section{Star-convex flat polyhedra}\label{starconvexsec}
\begin{definition}
We call a polyhedron $P$ in $\R^n$ strictly star-convex with respect to a point $x$ in its interior if for any $y \in P$, the interior of the segment $[x,y]$ lies in the interior of $P$. We call the polyhedron $P \subset \R^n$ flat if it is $n$-dimensional.

We call a triangulation $K$ of $P$ regular if there is a  function $h: |K| \to \R$ that is linear on each simplex of $K$ and strictly convex across codimension one simplexes of $K$, i.e., if points $x$ and $y$ are in neighboring top-dimensional simplexes of $K$ then the segment connecting $h(x)$ and $h(y)$ is above the graph of $h$ (except at the end points).
\end{definition}

In their proof of the weak Oda conjecture, Morelli \cite{Mor} and Wlodarczyk \cite{Wlo} proved that any two triangulations of a convex polyhedron are related by a sequence of stellar moves. As interior stellar moves can be given by bistellar moves in dimension  3, Izmestiev and Schlenker \cite{IzmSch} have improved on this result to show the following:
\begin{theorem} [Lemma 2.11 of \cite{IzmSch}]
Any two triangulations of a convex polyhedron $P$ in $\R^3$ can be connected by a sequence of geometric bistellar moves, boundary geometric stellar moves and continuous displacements of the interior vertices.
\end{theorem}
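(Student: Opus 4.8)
The plan is to bootstrap from the Morelli--Wlodarczyk theorem: since $P \subset \R^3$ is convex, any two triangulations of $P$ are related by a finite sequence of stellar moves (subdivisions and welds). It therefore suffices to show that each stellar move occurring in such a sequence can be replaced, within the class of geometric triangulations of $P$, by a composition of the three permitted operations: geometric bistellar moves, boundary geometric stellar moves, and continuous displacements of the interior vertices. First I would sort the moves of the Morelli--Wlodarczyk sequence according to the location of the subdivided simplex $A$. If $A \subset \del P$ the move is already a boundary geometric stellar move, so nothing needs to be done; the remaining case is $A$ interior, where the real work lies.

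Next I would realise each interior stellar move by bistellar moves, treating the possible dimensions of $A$ in turn. A stellar subdivision on an interior $3$-simplex is, by definition, the $1$--$4$ move $\kappa(A,B)$ with the new vertex $a$ placed in the interior of the tetrahedron, and so is already a geometric bistellar move. For an interior $2$-simplex (a triangle $T$ shared by two tetrahedra) or an interior $1$-simplex (an edge whose link is a cycle), the star $st(A,K)$ is a PL ball; by Pachner's theorem its two relevant triangulations are related by topological bistellar moves, and the content of the lemma is that this relation can be arranged to stay geometric. Stellar welds are handled by reversing these sequences.

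The hard part will be ensuring geometricity of every intermediate triangulation and of every individual bistellar move. A combinatorial $2$--$3$ move on two tetrahedra is geometric only when their union is convex, so that the opposite edge actually pierces the shared triangle and the three resulting simplexes are nondegenerate; the non-geometric move of Figure~\ref{combimove} shows that this can fail. Likewise, the auxiliary vertices created along the way must lie in general position and in the interior of $P$. This is precisely the role of the continuous displacement of interior vertices: before applying each bistellar move I would slide the relevant interior vertices along straight segments---admissible because $P$ is convex and the vertices remain interior---into a configuration for which the move is geometrically valid, and afterwards slide the newly created vertex to the position prescribed by the original stellar move.

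The crux, then, is to verify that such displacement paths can always be chosen without passing through a degenerate (flat or inverted) configuration, i.e. that for the given combinatorics the space of admissible interior vertex positions is connected. Once this connectivity is secured for the local models above, I would stitch the pieces together along the entire Morelli--Wlodarczyk sequence, inserting a displacement phase between consecutive bistellar moves as needed, to conclude that $K_1$ and $K_2$ are connected by the three permitted operations. I expect the geometric realisation of the codimension-one and codimension-two interior moves, rather than the bookkeeping of boundary moves, to be the principal obstacle.
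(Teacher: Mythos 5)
Your proposal is correct and follows essentially the same route the paper intends: the paper does not prove this statement at all (it is quoted as Lemma 2.11 of \cite{IzmSch}), and the only proof indication the paper gives---Morelli--Wlodarczyk stellar equivalence, with boundary stellar moves kept as such and interior stellar moves realised in dimension 3 by geometric bistellar moves plus displacements of interior vertices---is exactly your decomposition. The crux you leave open closes with one observation you may as well record: for a fixed combinatorics, the admissible positions of a single interior vertex form an intersection of open half-spaces (one for each tetrahedron containing it, requiring the vertex to lie on the correct side of the plane of its opposite face), hence a convex and in particular connected set, so the displacements you need---placing the new vertex so that each $2$--$3$ move is geometric, then sliding it onto the subdivided edge or triangle---always exist along straight segments.
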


With their techniques however, even when the two triangulations agree on the boundary, we still need boundary stellar moves to relate them. Our aim in this section is to show that their techniques can be tweaked to give a boundary relative version for triangulations of strictly star-convex flat polyhedra in any dimension. The main theorem of this section is the following:

Our aim in this section is to show that their techniques also give a boundary relative version for triangulations of strictly star-convex flat polyhedra, with the stronger notion of bistellar equivalence in place of stellar equivalence. The main theorem of this section is the following:
\begin{theorem}\label{starconvexthm}
Let $P \subset \R^n$ be a strictly star-convex flat polyhedron. Let $K_1$ and $K_2$ be triangulations of $P$ that agree on the boundary. Let $\beta^s K_1$ and $\beta^s K_2$ denote $s$-th derived subdivisions of $K_1$ and $K_2$ which also agree on the boundary. Then there exists $s\in \N$, such that $\beta^s K_1$ and $\beta^s K_2$ are bistellar equivalent.
\end{theorem}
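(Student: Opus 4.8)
The plan is to prove the statement in three stages. First, I would use strict star-convexity to set up a cobordism between the two triangulations; second, I would invoke the extension of the Morelli--Wlodarczyk theorem from convex to strictly star-convex flat polyhedra to extract a finite sequence of \emph{stellar} moves relating $K_1$ and $K_2$ while fixing the boundary; and third, I would upgrade this stellar sequence to a sequence of \emph{geometric bistellar} moves at the cost of passing to a common derived subdivision. Throughout, the enabling property is that for the star-center $x$ of $P$ the open segment from $x$ to any point of $P$ stays in the interior, so all boundary data can be coned inward to $x$ without leaving $P$; this is exactly what lets the relation be made boundary-relative.

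For the first two stages I would work in $P \times [0,1] \subset \R^{n+1}$. A short computation shows that if $P$ is strictly star-convex with respect to $x$, then $P \times [0,1]$ is strictly star-convex with respect to $(x,\tfrac12)$: for $\lambda \in (0,1)$ the first coordinate of a point on the segment from $(x,\tfrac12)$ lies in the open segment $[x,y]\subset \operatorname{int}(P)$, and the second coordinate is a proper convex combination of $\tfrac12$ and a value in $[0,1]$, hence lies in $(0,1)$. I would place $K_1$ on $P \times \{0\}$ and $K_2$ on $P \times \{1\}$, and, using that $K_1|_{\del P} = K_2|_{\del P} =: L_{\del}$, triangulate the wall $\del P \times [0,1]$ by the vertical prism over $L_{\del}$, which restricts to $L_{\del}$ at both ends. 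Applying the star-convex extension of the Morelli--Wlodarczyk theorem to this cobordism then yields a finite sequence of stellar subdivisions and welds relating $K_1$ to $K_2$; because the wall is triangulated vertically and both ends restrict to $L_{\del}$, every move can be taken to fix $\del P$, i.e. to be an interior stellar move.

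It remains to convert this finite sequence of interior stellar moves into geometric bistellar moves. Recall from the definitions that a bistellar move $\kappa(A,B)$ is the composition $(B,a)^{-1}(A,a)$ of a stellar subdivision and a weld, and that a combinatorial stellar or bistellar move need not be geometrically realizable (Figure \ref{combimove}). The mechanism I would exploit is that once the star of the simplex being subdivided is sufficiently fine, the link of the relevant simplex has the form $\del B$ for a genuine, nondegenerate geometric simplex $B$, and every simplex created by the move is nondegenerate and totally geodesic, so the move can be fired geometrically. Since a derived subdivision shrinks every star, each of the finitely many stellar moves in the sequence becomes geometrically realizable by bistellar moves after finitely many applications of $\beta$. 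Taking $s$ to be the maximum of these finitely many subdivision orders produces a single $s$ that suffices for the whole sequence, and $\beta^s K_1$ and $\beta^s K_2$ still agree on the boundary, as recorded in the statement.

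I expect the third stage to be the main obstacle, for two intertwined reasons. The boundary-relative requirement forbids moving boundary vertices, so all subdivision and all move activity must be coned to the interior star-center $x$; this is where strict star-convexity is genuinely needed, as opposed to mere star-convexity. At the same time, one must guarantee geometric (nondegenerate, totally geodesic) realizability of every intermediate move, and must do so \emph{uniformly}: the delicate point is to show that applying $\beta$ a single prescribed number of times turns one interior stellar move into geometric bistellar moves in a way that is stable under further derived subdivision, so that a common order $s$ can be chosen for the entire finite sequence and the resulting bistellar equivalences of $\beta^s K_1$ and $\beta^s K_2$ compose. Establishing this local-to-global stability of geometric realizability under $\beta$ is the technical heart of the argument.
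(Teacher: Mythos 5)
Your proposal has two genuine gaps, and they sit at the load-bearing points of the argument. First, the invocation in your second stage is circular: the ``extension of the Morelli--Wlodarczyk theorem from convex to strictly star-convex flat polyhedra'' is not a result available in the literature --- modulo the stellar/bistellar distinction it \emph{is} Theorem \ref{starconvexthm}, the very statement being proved. Even granting it, that statement compares two triangulations of a fixed polyhedron; applying it ``to the cobordism'' $P\times[0,1]$ does not produce moves relating the two ends $K_1$ and $K_2$, and in fact you never specify a triangulation of the solid $P\times[0,1]$, only boundary data. The mechanism that actually extracts moves from a cobordism is the sweep-out of a \emph{regular} triangulation: one removes top-dimensional simplexes in non-increasing order of the vertical derivative of the convexity certificate $h$, each removal is a bistellar move on the upper boundary, and the (linear) projection carries these to bistellar moves downstairs. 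Regularity never appears in your proposal, yet it is the crux of the paper's proof: Lemma \ref{regular} (resting on \cite{AdiIzm}) shows some derived subdivision $\beta^s$ of the cobordism is regular, and this is the sole source of the integer $s$ in the statement. The paper's cobordism is also not a product but the cone $C(K_i)=a\star K_i$ with apex projecting to an interior point of $P$; strict star-convexity is what makes this cone project correctly onto $P$, and its lower boundary $C(\del K_1)=C(\del K_2)$ depends only on the shared boundary, furnishing the common middle complex $\beta^s C(\del K_1)$ through which $\beta^s K_1$ and $\beta^s K_2$ are connected. Your product cobordism has no such common middle object.

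Second, your third stage rests on a claim that is unsupported and, in the generality needed, most likely false: that after finitely many derived subdivisions an interior stellar move becomes realizable by geometric bistellar moves because the relevant link acquires the form $\del B$. Derived subdivision does not standardize links; the paper itself cites Theorem 4(c) of \cite{AdiIzm} to the effect that in dimension at least $5$ the number of derived subdivisions needed to make a vertex link combinatorially polytopal is not even computable, and that stars of simplexes may never become combinatorially convex. The stellar-to-bistellar conversion is a genuine theorem only in dimensions $2$ and $3$ (Lemma 2.11 of \cite{IzmSch}), which is precisely why the paper's main results are stated up to derived subdivisions in higher dimensions. Finally, the composition problem you flag yourself --- that a single move between $K$ and $K'$ induces no controlled relation between $\beta^s K$ and $\beta^s K'$ --- is left unsolved, and it is fatal to your architecture: one cannot apply $\beta^s$ to a chain of moves and expect the chain to survive. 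The paper never faces this issue, because $\beta^s$ is applied once, to the cone, \emph{before} any moves are performed, and all bistellar moves then happen between the already-subdivided complexes.
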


We use the following simple observation in the proof:
\begin{lemma}[Lemma 4, Ch 1 of \cite{Zee}]\label{derivedtorefined}
Let $K$ and $L$ denote two simplicial complexes with $|K| \subset |L|$. Then there exists $r\in\N$ and a subdivision $K'$ of $K$ such that $K'$ is a subcomplex of $\beta^r L$.
\end{lemma}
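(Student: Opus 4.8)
The plan is to build the required subdivision skeleton-by-skeleton, exploiting the fact that the derived subdivision is assembled from stellar subdivisions $(A,a)$ in which the new vertex $a$ may be taken to be \emph{any} interior point of $A$. The local picture to record first is that, for each simplex $\sigma$ of $L$, the trace $|K|\cap\sigma$ is a subpolyhedron of $\sigma$, and these traces are compatible along shared faces because $K$ is itself a complex. The goal is to arrange, after finitely many rounds of derived subdivision, that the trace of $|K|$ on every simplex of the subdivided complex is a union of simplices; the simplices of $\beta^r L$ that lie in $|K|$ then form a subcomplex $K'$, and since $|K'|=|K|$ this $K'$ visibly subdivides $K$.

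First I would induct on the skeleta of $L$. There is nothing to do on the $0$-skeleton, and a subdivision of a subcomplex remains a subcomplex, so each further derived round preserves any adaptation already achieved on the lower skeleta. Suppose the trace of $|K|$ on the $i$-skeleton is a subcomplex. Passing to an $(i+1)$-simplex $\sigma$, the inductive hypothesis makes $|K|\cap\partial\sigma$ a subcomplex of $\partial\sigma$, while $|K|\cap\sigma$ is a subpolyhedron meeting $\partial\sigma$ in that subcomplex. I would then subdivide the interior using derived rounds whose free interior vertices are placed on $|K|\cap int(\sigma)$: after enough rounds the trace $|K|\cap\sigma$ becomes star-shaped from a chosen interior vertex $a$, and the cone $a\star(|K|\cap\partial\sigma)$ realises it as a subcomplex without disturbing $\partial\sigma$. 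Ranging over all simplices of a given dimension and then over dimensions yields, after some number $r$ of derived rounds controlled by $\dim L$ and the combinatorial complexity of the traces, a subdivision $\beta^r L$ adapted to $|K|$ throughout.

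An alternative, and perhaps cleaner, route splits the argument into two standard steps. Since $|K|$ is a subpolyhedron of $|L|$, a compatible-triangulation argument first produces a subdivision $L_1$ of $L$ together with a subdivision $K_1$ of $K$ sitting in $L_1$ as a subcomplex. Second, I would invoke the cofinality of iterated derived subdivisions among subdivisions of $L$: writing $L_1$ as a finite sequence of stellar subdivisions of $L$ and absorbing each stellar move into a derived round whose new vertices are chosen to dominate it, one obtains $r$ with $\beta^r L$ refining $L_1$. The simplices of $\beta^r L$ contained in $|K|$ then constitute a subcomplex $K'$ refining $K_1$, hence subdividing $K$.

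The main obstacle in either route is the same in disguise: controlling that only finitely many derived rounds are needed, that is, that derived subdivisions are genuinely cofinal rather than merely admitting some common refinement with a given subdivision. In the skeletal induction the delicate point is that $|K|\cap\sigma$ is in general not a cone from a single interior vertex, so it must first be broken into star-shaped pieces by preliminary derived rounds, and one must verify this can always be done from the inside without altering the already-adapted boundary trace. In the two-step route the corresponding difficulty is precisely the cofinality claim, and it is here that the freedom to place the stellar vertices—guaranteed by the paper's definition of stellar subdivision—is indispensable, since the canonical barycentric subdivision is by itself not cofinal.
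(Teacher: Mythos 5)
First, a point of reference: the paper does not prove this lemma at all --- it is quoted, with citation, as Lemma 4 of Chapter 1 of Zeeman's notes \cite{Zee}. So the benchmark is the classical argument, whose overall shape is your second route: first produce compatible subdivisions ($L_1$ of $L$ containing a subdivision $K_1$ of $K$ as a subcomplex), then show that iterated derived subdivisions of $L$, \emph{with freely chosen new vertices}, are cofinal, i.e.\ some $\beta^r L$ refines the given subdivision $L_1$. You have correctly identified this cofinality statement as the crux, and correctly observed that barycentric subdivision with fixed barycenters cannot work. The problem is that neither of your routes actually establishes it, and the mechanisms you offer in its place would fail.

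In the two-step route, the fatal move is ``writing $L_1$ as a finite sequence of stellar subdivisions of $L$.'' This is not available: whether every linear subdivision of a simplicial complex (even of a single simplex) can be obtained by stellar subdivisions alone is a well-known open problem. Alexander's theorem gives only stellar \emph{equivalence}, which permits stellar welds as well, and a sequence containing welds cannot be absorbed move-by-move into derived rounds. Even granting a stellar decomposition, the absorption scheme has an ordering defect: once $\beta^{r_1}L$ has been arranged to refine $(A_1,a_1)L$, the next complex $(A_2,a_2)(A_1,a_1)L$ is not a subdivision of $\beta^{r_1}L$ (neither refines the other), so there is nothing for the next derived round to ``dominate.'' The skeletal route has the same hole in different clothing: the trace $|K|\cap\sigma$ is a fixed polyhedron, unaffected by how you subdivide $\sigma$; it need not meet $\partial\sigma$ at all (it can be a ball in $int(\sigma)$), it need not be star-shaped from any point, and it is in general not the cone $a\star(|K|\cap\partial\sigma)$, so the sentence ``after enough rounds the trace becomes star-shaped'' has no meaning as stated. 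You flag this yourself, but flagging it does not repair it --- the repair \emph{is} the lemma. The actual classical proof runs the skeletal induction at the level of convex cells: over each simplex $\sigma$ of $L$ the trace is a finite union of convex cells $A\cap\sigma$, one first cuts $\sigma$ along the affine walls of these cells to make the trace a subcomplex of a convex-cell subdivision compatible with the already-treated boundary, and then chooses the vertices of successive derived rounds on this cell structure so that every derived simplex eventually lies in a single cell. A citation-level proof (compatible subdivisions as in \cite{RouSan}, plus the cofinality lemma for derived subdivisions in \cite{Zee}) would be perfectly acceptable --- it is what the paper itself does --- but the substitute arguments you supply for those two ingredients do not work.
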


\begin{lemma}\label{regular}
Let $K$ denote a triangulated flat polyhedron. Then for some $s \in \N$, its $s$-th derived subdivision $\beta^s K$ is regular.
\end{lemma}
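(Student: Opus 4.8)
The plan is to use the standard reformulation of regularity: a triangulation $T$ of a flat polyhedron is regular precisely when it admits a \emph{convex lift}, that is, an assignment of a height $h(v)\in\R$ to each vertex $v$ of $T$ whose piecewise-linear extension is linear on each simplex and bends strictly upward across every codimension-one simplex. Thus it suffices to produce, for $s$ large enough, such a height function on the vertices of $\beta^s K$. A fixed strictly convex function (such as $x\mapsto |x|^2$) will not do the job: its piecewise-linear interpolant on $\beta^s K$ is convex only when $\beta^s K$ happens to be the Delaunay triangulation of its vertex set, which it generically is not. So the heights must be adapted to the combinatorics of the subdivision rather than read off a single ambient function.

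First I would recall that a derived subdivision is a composition of stellar subdivisions at barycenters, carried out from dimension $n$ down to dimension $1$, and that iterating this gives $\beta^s K$. The construction of $h$ then follows the same ordering: each time a simplex $\sigma$ is starred at its barycenter $\hat\sigma$, assign $\hat\sigma$ the value of the current lift at that point minus a small positive perturbation $\delta_\sigma$. Pushing the new apex slightly below the current graph turns the star of $\sigma$ into a downward cone (a bowl), so that every codimension-one fold interior to $st(\sigma)$ created by this starring becomes strictly convex. Organizing the $\delta_\sigma$ into sharply separated scales, so that the perturbations of a later, finer round are negligible against the strict-convexity margin already secured in an earlier round, keeps every previously convex fold convex.

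The main obstacle is the folds that do not lie interior to a single star: the faces shared between the stars of different simplices, which are themselves starred only in a later round. These can be concave at an intermediate stage. For instance, after all top-dimensional simplices are starred and their barycenters pushed down, each original facet becomes a concave ridge, since the two adjacent simplices both tilt downward away from it. Such a fold must be repaired exactly when the facet itself is starred in the next round, by dropping its barycenter far enough to dent the ridge into a convex valley, yet not so far as to destroy the convexity interior to the neighbouring stars. Reconciling these competing demands is the crux. I would handle it by a descending-dimension induction: after all simplices of dimension at least $k$ have been starred, show that every codimension-one fold lying in the $k$-skeleton can be made strictly convex by the choice of the round-$k$ scale, while the finer rounds perturb only within regions that are already strictly convex with a definite margin. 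It is precisely the difficulty of meeting all of these local convexity inequalities simultaneously that forces the passage to $\beta^s K$ for possibly $s>1$: each further derived subdivision enlarges the supply of free heights and relaxes the constraints, and a bounded number of them suffices to make the full system solvable. As a cross-check, the same conclusion follows from the classical fact that triangulations produced by iterated stellar (``pulling'') subdivisions are regular, of which the derived subdivision is an instance.
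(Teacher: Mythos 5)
Your construction breaks down exactly at the point you yourself call ``the crux,'' and the deferral there is not a technicality: it is the entire content of the lemma. Pushing barycenters below the current lift creates strict convexity only across the new folds interior to the star being subdivided; every fold lying inside an old codimension-one face of $K$ starts out flat or concave, and its repair requires the next round's drop to be \emph{large} compared with the previous round's drops, while preservation of the convexity already secured inside the stars requires that same drop to be \emph{small} compared with the previously secured margins. These demands are quantitatively incompatible with your ``sharply separated scales'' scheme: already for two triangles sharing an edge $e$, with the flat initial lift, top barycenters dropped by $\delta_1,\delta_2$ and all edge midpoints dropped by a common $\epsilon$, convexity across the half-edges of $e$ forces $2\epsilon>\delta_1+\delta_2$, while convexity across the new edges joining original vertices to $b_1$ forces $\tfrac32\delta_1>2\epsilon$; together these give $\tfrac32>2$. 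Per-simplex tuning of scales rescues this trivial (regular) example, but for a general non-regular $K$ there is no strictly convex initial lift to supply margins along the old faces, and you offer no argument that the resulting global system of inequalities is solvable for any $s$. Your two fallback assertions are both unsound: the ``classical fact'' about pulling subdivisions says that pulling refinements of a \emph{regular} triangulation are regular, so invoking it for $\beta K$ when $K$ may be non-regular is circular; and the claim that further derived subdivisions ``relax the constraints'' so that ``a bounded number of them suffices'' is unsupported and, in its uniform reading, false --- this paper itself cites Theorem 4(c) of \cite{AdiIzm} to the effect that the number of derived subdivisions needed to make even the vertex links combinatorially polytopal (which regularity would imply) admits no computable bound.

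The missing ingredient is an \emph{external} source of convexity, and that is how the paper's proof actually runs: enclose $|K|$ in a single $n$-simplex $\Delta$; by Zeeman's lemma (Lemma \ref{derivedtorefined}) some subdivision $K'$ of $K$ is a subcomplex of $\beta^r\Delta$; since $\Delta$ is trivially regular, $\beta^r\Delta$ is regular --- this is the one place where a pulling-type perturbation argument is legitimate, because it starts from something regular --- and restricting its height function to $K'$ (whose codimension-one simplexes are codimension-one simplexes of $\beta^r\Delta$) shows that $K'$ is a regular subdivision of $K$; a second application of Zeeman's lemma produces $s$ with $\beta^s K$ a subdivision of $K'$; finally, Claim 3 in the proof of Theorem 1 of \cite{AdiIzm} converts the fact that the derived subdivision $\beta^s K$ refines the regular subdivision $K'$ into regularity of $\beta^s K$. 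Note that $s$ arises from Zeeman's lemma, not from counting degrees of freedom of heights. If you want a self-contained argument, the step you would need to prove is that last transfer statement of Adiprasito--Izmestiev, not the ad hoc perturbation scheme.
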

\begin{proof}
Let $\Delta$ be an $n$-simplex with $|\Delta| \supset |K|$. By Lemma \ref{derivedtorefined}, there exists an $r \in \N$ and subdivision $K'$ of $K$ which is a subcomplex of $\beta^r \Delta$. As $\Delta$ is trivially a regular triangulation, so its stellar subdivision $\beta^r \Delta$ is also regular. Restricting its regular function to the subcomplex $K'$ we get $K'$ to be regular, as codimension one simplexes of $K'$ are also codimension one simplexes of $\beta^r \Delta$. As $|K|=|K'|$ so applying Lemma \ref{derivedtorefined} a second time, we get $s \in \N$ such that $\beta^s K$ is a subdivision of $K'$. Finally as $\beta^s K$ is the subdivision of a regular subdivision $K'$ of $K$ so by Claim 3 in proof of Theorem 1 of \cite{AdiIzm}, $\beta^s K$ is a regular triangulation.
\end{proof}

\begin{proof}[Proof of \ref{starconvexthm}]
The techniques in this proof are essentially those of Morelli and Wlodarczyk as detailed in Section 2 of \cite{IzmSch}. 

Choose $a \in \R^{n+1}$ outside $K_1$ such that the orthogonal projection map $pr: \R^{n+1} \to \R^n$ takes  the support of $C(K_1)=a\star K_1 \subset \R^{n+1}$ onto $P$. Furthermore $pr$ takes $a$ to the interior of an $n$-simplex of $K_1$ and to the interior of an $n$-simplex of $K_2$. By Lemma \ref{regular}, there exists $s\in \N$ so that $\beta^s C(K_1)$ is a regular simplicial cobordism between $\beta^s K_1$ and $\beta^s C(\del K_1)$ and similarly $\beta^s C(K_2)$ is a regular simplicial cobordism between $\beta^s K_2$ and $\beta^s C(\del K_2)$. Furthermore, as $\del K_1 = \del K_2$, by choosing the new vertices identically on $C(\del K_1)= C(\del K_2)$, we can ensure that $\beta^s C(\del K_1) = \beta^s C(\del K_2)$. We also choose the new vertices of the derived subdivisions such that for any simplex $A$ in $C(K_1)$ or $C(K_2)$ of dimension less than $n+1$, $pr(A)$ is a simplex of the same dimension as $A$.

Denote $\beta^s C(K_1)$ by $K$. Let $h:|K| \to \R$ be a regular function for $K$. If a simplex $\sigma'$ has some point above a simplex $\sigma$ then $\frac{\del h}{\del x_{n+1}}$ on $\sigma'$ is greater than $\frac{\del h}{\del x_{n+1}}$ on $\sigma$. So inductively removing simplexes in non-increasing order of the vertical derivative of $h$ we ensure that the projection of the upper boundary onto $P$ is always one-to-one.  That is, we get a sequence of triangulations $K=\Sigma_0$, $\Sigma_1$, ... , $\Sigma_N= K_1$ such that $\Sigma_{i+1}= \Sigma_i \setminus \sigma_i$ and the orthogonal projection $pr: \del^+ \Sigma_i \to P$ from the upper boundary of $\Sigma_i$ onto $P$ is one-to-one for every $i$. Removing an $n+1$-simplex $\sigma_i$ from $K$ corresponds to a bistellar move on $\del^+ \Sigma_i$. As the projection map is linear so it also corresponds to a bistellar move taking $pr(\del^+ \Sigma_i)$ to $pr(\del^+ \Sigma_{i+1})$. Therefore  $pr(\del^+ \Sigma_0)= \beta^s C(\del K_1)$ is bistellar equivalent to $pr(\del^+ \Sigma_N)= \beta^s K_1$. Consequently, $\beta^s K_1$ is bistellar equivalent to $\beta^s K_2$ via $\beta^s C(\del K_1)=\beta^s C(\del K_2)$.
\end{proof}

\section{Geometric manifolds}\label{geomsec}
\begin{definition}
Let $K$ be a geometric triangulation of a Riemannian manifold $M$ and let $L$ be a subcomplex of $K$. We call $K$ locally geodesically-flat relative to $L$ if for each simplex $A$ of $K\setminus L$, $st(A, K)\setminus lk(A, K)$ is simplicially isomorphic to the interior of a star-convex flat polyhedron in $\R^n$ by a map which takes geodesics to straight lines.
\end{definition}

\begin{definition}
Let $L$ be a (possibly empty) subcomplex of $K$ containing $\del K$ and let $\alpha K$ be a subdivision of $K$ which agrees with $K$ on $L$. Let $\beta^\alpha_r K$ be the subdivision of $K$ such that, if $A$ is a simplex in $L$ or $dim (A) \leq r$ , then $\beta^\alpha_r A = \alpha A$. If $A$ is not in $L$ and $dim (A) > r$ then $\beta^\alpha_r A = a \star \beta^\alpha_r \del \alpha A$ for $a$ a point in interior of $A$, i.e. it is subdivided as the cone on the already defined subdivision of its boundary. Observe that $\beta^\alpha_n K$ is $\alpha K$ while $\beta^\alpha_0 K = \beta_L K$ is a derived subdivision of $K$ relative to $L$.
\end{definition}

\begin{lemma}\label{locallyflatlemma}
Let $K$ be a locally geodesically-flat simplicial complex relative to a (possibly empty) subcomplex $L$ which contains $\del K$. Let $\alpha K$ be a geometric subdivision of $K$ which agrees with $K$ on $L$. Then there exists $s\in \N$ for which $\beta^s \alpha K$ is related to $\beta^s K$ by bistellar moves which keep $\beta^s L$ fixed.
\end{lemma}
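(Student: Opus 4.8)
The plan is to interpolate between $\alpha K$ and the relative derived subdivision $\beta_L K$ through the subdivisions $\beta^{\alpha}_{r} K$, and to relate consecutive terms by geometric bistellar moves. Recall $\beta^{\alpha}_{n} K = \alpha K$ and, as observed just after its definition, $\beta^{\alpha}_{0} K = \beta_L K$ does not depend on $\alpha$. Thus if I can show that for each $r$ the two subdivisions $\beta^{\alpha}_{r} K$ and $\beta^{\alpha}_{r-1} K$ become bistellar equivalent relative to $L$ after finitely many derived subdivisions, then telescoping from $r=n$ down to $r=0$ relates $\beta^s \alpha K$ to $\beta^s \beta_L K$. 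Applying the same statement to the trivial subdivision $\alpha = \mathrm{id}$ (which also satisfies the hypotheses) relates $\beta^s K$ to the same $\beta^s \beta_L K$, and composing the two chains yields the lemma. To use a single exponent $s$ throughout I would first note that bistellar equivalence relative to $L$ is preserved under further derived subdivision: the two sides of a single move $\kappa(A,B)$ span the ball $A \star \del B$, which lies in $st(A,K)$ and is therefore a strictly star-convex flat polyhedron by local geodesic-flatness, so Theorem~\ref{starconvexthm} makes their derived subdivisions bistellar equivalent relative to the boundary. Taking $s$ to be the maximum of the finitely many exponents that arise then lets me concatenate all the equivalences.

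The core step is to compare $\beta^{\alpha}_{r} K$ and $\beta^{\alpha}_{r-1} K$. By construction these two subdivisions agree on every simplex except the $r$-simplexes $A \notin L$ (together with the cofaces that must be re-coned accordingly): on such an $A$ the first restricts to $\alpha A$ while the second restricts to the cone $a \star \del \alpha A$ over the same boundary subdivision $\del \alpha A$, so both are triangulations of the flat simplex $A$ agreeing on $\del A$. To turn this into ambient bistellar moves I would invoke local geodesic-flatness relative to $L$: for each such $A$ the open star $st(A,K)\setminus lk(A,K)$ is linearly isomorphic to the interior of a strictly star-convex flat polyhedron $P_A \subset \R^n$. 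The two subdivisions induce two triangulations of $P_A$, and after passing to a common derived subdivision (using Lemma~\ref{derivedtorefined} so that the derived subdivision inside the star is the restriction of a global one) they agree on $\del P_A$. Theorem~\ref{starconvexthm} then supplies, for some $t$, a sequence of bistellar moves relating the $t$-th derived subdivisions of the two triangulations of $P_A$ while fixing $\del P_A$; pulling this back through the linear isomorphism and extending by the identity outside the star gives geometric bistellar moves fixing $\beta^t L$ and everything outside $st(A,K)$.

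The main obstacle is the globalisation of these per-star modifications. Distinct $r$-simplexes need not have disjoint stars — opposite faces of a common higher simplex each lie in the other's star — and $\beta^{\alpha}_{r} K$ and $\beta^{\alpha}_{r-1} K$ can already differ on the $r$-dimensional simplexes of $lk(A,K)$, so the two triangulations of a single $P_A$ need not agree on $\del P_A$ and the local applications of Theorem~\ref{starconvexthm} are not a priori independent. I expect this to be the hard part of the argument. I would handle it by first taking one derived subdivision to confine the modifications to neighbourhoods of the disjoint open simplexes $\mathrm{relint}(A)$, then performing the star modifications in a fixed order, each relative to the boundary of its star, while choosing the interior cone-points and all derived vertices coherently across adjacent stars — exactly as the boundary vertices are chosen identically in the proof of Theorem~\ref{starconvexthm} — so that the boundary subdivisions match wherever two stars meet. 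One must also verify that the derived subdivision produced inside each $P_A$ by Theorem~\ref{starconvexthm} is induced by a single global derived subdivision of $K$, which is again arranged through Lemma~\ref{derivedtorefined}, and that each $P_A$ is genuinely strictly star-convex, which follows from the geodesic convexity of $st(A,K)$ in a constant curvature manifold.
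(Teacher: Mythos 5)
Your overall skeleton matches the paper's: interpolate through $\beta^\alpha_r K$ from $\beta^\alpha_n K=\alpha K$ down to $\beta^\alpha_0 K=\beta_L K$, realize each transition by local applications of Theorem~\ref{starconvexthm} through the geodesic embeddings, and finish by running the same argument with the trivial subdivision to reach $\beta^s K$. But the step you yourself flag as ``the hard part'' is exactly the content of the paper's proof, and your treatment of it has a genuine gap. The claim in your second paragraph that the triangulations of $P_A$ induced by $\beta^\alpha_r K$ and $\beta^\alpha_{r-1} K$ agree on $\del P_A$ (after a common derived subdivision) is false, and no coherent choice of derived vertices or cone points can repair it: the boundary sphere $\del A \star lk(A,K)$ of the full star contains other $r$-simplexes $A'$ not in $L$ (for instance $A'=F\star g$ with $F$ an $(r-1)$-face of $A$ and $g$ a vertex of $lk(A,K)$), and there the two subdivisions differ intrinsically --- one restricts to $\alpha A'$, the other to the cone $a'\star \alpha\,\del A'$. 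Your proposed remedy (a fixed order plus ``coherent'' vertex choices) is left as a plan, and its coherence ingredient does not address this mismatch. The missing idea is the paper's localization device: when a single $r$-simplex $A\notin L$ is switched from $\alpha A$ to $a\star\alpha\,\del A$ (with the forced re-coning of the cofaces of $A$), the complex changes only inside $st(\alpha A,\beta^\alpha_r K)$, the star of the subdivided simplex inside the partially subdivided complex. This is a strictly star-convex (with respect to $a$) \emph{proper} subset of $st(A,K)$; its boundary is untouched by the switch; the switched complex restricted to it is exactly the cone $C(\del\, st(\alpha A,\beta^\alpha_r K))$; and these regions have pairwise disjoint interiors as $A$ ranges over the $r$-simplexes not in $L$, so the local equivalences from Theorem~\ref{starconvexthm} concatenate with no ordering or coherence issues whatsoever. (Your sequential idea could be salvaged with full stars, since a single switch is supported in the open star of $A$ and hence fixes $\del st(A,K)$ --- but that is the statement you needed to isolate and prove, not the boundary-agreement claim you made.)

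Two smaller points. Your uniformization of $s$ is circular: applying Theorem~\ref{starconvexthm} to the two sides $A\star\del B$ and $\del A\star B$ of a move only shows their derived subdivisions become equivalent after \emph{further, uncontrolled} derived subdivisions; it does not show that equivalence at level $s$ persists at level $s+1$, so you cannot synchronize finitely many equivalences at a common level this way. The clean fix is to note that the $s$ produced by Theorem~\ref{starconvexthm} works for every larger exponent as well, because regularity in Lemma~\ref{regular} persists under further derived subdivision (the claim of \cite{AdiIzm} cited there), and then take the maximum over the finitely many local applications; the paper uses this implicitly. Finally, strict star-convexity of the relevant region does not ``follow from the geodesic convexity of $st(A,K)$'': stars of simplexes need not be convex, as the paper itself emphasizes when explaining why star-convex polyhedra are unavoidable. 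It holds because every simplex of $st(A,K)$ has $A$ as a face, so the star is strictly star-shaped about any point of the interior of $A$.
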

\begin{proof}
For $A$ a positive dimensional $r$-simplex in $K\setminus L$, $st(A, \beta^\alpha_r K)$ is a strictly star-convex subset of $st(A, K)$. As $K$ is locally geodesically-flat relative to $L$, there exists a geodesic embedding taking $st(A, \beta^\alpha_r K)$ to a strictly star-convex flat polyhedron of $\R^n$. By Theorem \ref{starconvexthm}, $\beta^s st(A, \beta^\alpha_r K)$ is bistellar equivalent to $\beta^s C(\del st(A, \beta^\alpha_r K))$. As $A$ is not in $L$ so no interior simplex of $st(A, \beta^\alpha_r K)$ is in $L$ and consequently these bistellar moves keep $\beta^s L$ fixed. Taking all simplexes $A$ in $K \setminus L$ of dimension $r=n$, we get a sequence of bistellar moves taking $\beta^s \beta^\alpha_r K$ to $\beta^s \beta^\alpha_{r-1} K$. Ranging $r$ from $n$ down to $1$, we inductively obtain a sequence of bistellar moves taking $\beta^s \alpha K =\beta^s \beta^\alpha_n K$ to $\beta^s \beta_L K=\beta^s \beta^\alpha_{0} K$, which keeps $\beta^s L$ fixed.  

And finally, arguing as above with the trivial subdivision $\alpha K = K$, we get $\beta^s \beta_L K$ from $\beta^s K$ by bistellar moves which keep $\beta^s L$ fixed. 
\end{proof}

The following simple observation allows us to treat the star of a simplex in a geometric triangulation as the linear triangulation of a star-convex polytope in $\R^n$ and bistellar moves in the manifold as bistellar moves of the polytope.

\begin{lemma}\label{convex}
Let $K$ be a geometric simplicial triangulation of a spherical, hyperbolic of Euclidean $n$-manifold $M$ and let $L$ be a subcomplex of $K$ containing $\del K$. When $M$ is spherical we require the star of each positive dimensional simplex of $K\setminus L$ to have diameter less than $\pi$. When $M$ is cusped we include the ideal vertices in $L$. Then $K$ is locally geodesically-flat relative to $L$. 
\end{lemma}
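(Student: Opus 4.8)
The plan is to produce, for each positive-dimensional simplex $A$ of $K\setminus L$, an explicit geodesic chart on the closed star $st(A,K)$ carrying it to a linearly triangulated flat polyhedron in $\R^n$, and then to verify star-convexity of the image. The chart will come from the projective (geodesic-line preserving) model of the relevant constant curvature geometry. First I would recall that a complete constant curvature manifold has the form $\mathbb{X}^n/\Gamma$, where $\mathbb{X}^n$ is $\R^n$, $\H^n$ or $\S^n$, and carries a developing map. Since the closed star $st(A,K)=A\star lk(A,K)$ deformation retracts onto $A$ and is therefore contractible, the developing map restricts to a single-valued geodesic embedding $D$ of $st(A,K)$ into the model space $\mathbb{X}^n$; because each simplex of $K$ is totally geodesic, $D$ carries each simplex of the star onto a geodesic simplex of $\mathbb{X}^n$.

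Next I would compose $D$ with the projective model map $\pi\colon \mathbb{X}^n \to \R^n$: the identity when $\mathbb{X}^n=\R^n$, the Beltrami--Klein model when $\mathbb{X}^n=\H^n$, and the gnomonic (central) projection when $\mathbb{X}^n=\S^n$. In each case $\pi$ sends geodesics to straight line segments, so $\phi=\pi\circ D$ takes every totally geodesic simplex of $st(A,K)$ to a linear simplex and carries geodesics to straight lines, exactly as the definition of local geodesic flatness requires. The images assemble into an $n$-dimensional flat polyhedron $P=\phi(st(A,K))$ triangulated by the linear simplices $\phi(\sigma)$.

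Finally I would check star-convexity with respect to $\phi(x)$, where $x$ is an interior point of $A$. Every simplex $\sigma$ of $st(A,K)$ has $A$ as a face and hence contains $x$; since $\sigma$ is geodesically convex, the geodesic segment from $x$ to any $y\in\sigma$ stays in $\sigma\subset st(A,K)$, and $\phi$ sends it to the straight segment $[\phi(x),\phi(y)]\subset P$. Because $A$ is an interior simplex (as $A\notin L\supseteq\del K$), the point $x$ lies in the interior of the $n$-dimensional star, so the open segments run into the interior of $P$, giving strict star-convexity; thus $\phi$ restricts to the required simplicial isomorphism from $st(A,K)\setminus lk(A,K)$ onto the interior of $P$.

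The main obstacle is the spherical case, where the gnomonic projection is only defined and injective on an open hemisphere: this is precisely what the hypothesis that each star has diameter less than $\pi$ buys us, since a subset of $\S^n$ of diameter less than $\pi$ is contained in an open hemisphere, so $D(st(A,K))$ lands in the domain of $\pi$. A secondary point to handle is the cusped hyperbolic case: there the ideal vertices lie in $L$, and although a finite simplex $A\in K\setminus L$ may have ideal vertices in its link, the Beltrami--Klein map still sends these to the boundary sphere of the model ball, and strict star-convexity with respect to the finite interior point $\phi(x)$ is unaffected.
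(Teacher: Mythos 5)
Your overall route is the same as the paper's: lift (develop) the star into the model space and compose with the projective model map --- the identity for $\R^n$, the Beltrami--Klein model for $\H^n$, the gnomonic projection for $\S^n$ --- so that geodesics become straight lines. In fact you are more careful than the paper on one point: the paper never explicitly verifies that the image is a strictly star-convex flat polyhedron, whereas your argument (every simplex of the star contains the face $A$, so the segment from an interior point of $\phi(A)$ to any point of $P$ lies inside a single linear simplex of $P$) is the right one and can be completed rigorously using join coordinates on $\phi(A)\star\phi(lk(A,K))$. A minor wording issue: injectivity of $D$ on the star does not follow from contractibility alone (that only gives single-valuedness); it follows because $M$ is complete, so $D$ is a lift of the injective inclusion $st(A,K)\hookrightarrow M$ along the covering $\mathbb{X}^n\to M$, and lifts of injective maps along coverings are injective. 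This is what the paper's word ``lift'' is doing.

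There is, however, one step whose stated justification is false: the claim that \emph{a subset of $\S^n$ of diameter less than $\pi$ is contained in an open hemisphere}. Counterexample: the three points of a great circle at angles $0$, $170^\circ$, $190^\circ$ have pairwise distances at most $170^\circ<\pi$, yet lie in no open (or even closed) hemisphere, since any hemisphere meets that great circle in at most a closed semicircle and no semicircle contains all three points; a \emph{connected} counterexample is a tripod of three geodesic arcs of length $170^\circ$ issuing from one point in directions $120^\circ$ apart, which is even star-shaped about its center. So ``diameter $<\pi$'' alone does not place $D(st(A,K))$ in the domain of the gnomonic projection; any correct argument must exploit the specific structure of the star --- that it is a union of convex geodesic $n$-simplexes sharing the face $A$, forming an $n$-ball bounded by the link --- and not merely its diameter. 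To be fair, the paper is also terse here (it says only ``as $B$ is small enough, lift $B$''), but it does not rest the step on a false general statement, and your proof as written does. This is the one place where your proposal needs repair rather than mere elaboration.
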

\begin{proof}
Let $K$ be a geometric triangulation of $M$ and let $B$ be the interior of the star of a simplex in $K \setminus L$. As $K$ is simplicial, $B$ is an open $n$-ball. 

When $M$ is hyperbolic, let $\phi: B \to \H^n$ be the lift of $B$ to the hyperbolic space in the Klein model. As geodesics in the Klein model are Euclidean straight lines (as sets) so $\phi$ is the required embedding. 

When $M$ is spherical, let $D$ be the southern hemisphere of $\S^n \subset \R^{n+1}$, let $T$ be the hyperplane $x_{n+1}=-1$ and let $p: D \to T$ be the radial projection map (gnomonic projection) which takes spherical geodesics to Euclidean straight lines. As $B$ is small enough, lift $B$ to $D$ and compose with the projection $p$ to obtain the required embedding $\phi$ from $B$ to $T \simeq \E^n$. 

When $B$ is Euclidean let $\phi$ be the lift of $B$ to $\R^n$, which is an isometry.
\end{proof}

It is known (Theorem 4(c) of \cite{AdiIzm}) that for simplicial complexes of dimension at least 5 the number of derived subdivisions required to make the link of a vertex combinatorially isomorphic to a convex polyhedron is not (Turing machine) computable. So in particular, the stars of simplexes of a geometric triangulation may not even be combinatorially isomorphic to convex polyhedra, which is why we need to work with star-convex polyhedra instead.

Given a Riemannian manifold $M$, a \emph{geometric polytopal complex} $C$ of $M$ is a finite collection of geometric convex polytopes in $M$ whose union is all of $M$ and such that for every $P \in C$, $C$ contains all faces of $P$ and the intersection of two polytopes is a face of each of them.

\begin{proof}[Proof of \ref{mainthm1} and \ref{mainthm2}]
By Lemma \ref{convex}, $K_1$ and $K_2$ are locally geodesically flat simplicial complexes. Let $C$ be the geometric polytopal complex obtained by intersecting the simplexes of $K_1$ and $K_2$. Then $K=\beta_L C$, the derived subdivision of $C$ relative to $L$ is a common geometric subdivision of $K_1$ and $K_2$. When $M$ is a cusped manifold we assume that we are given such a common geometric subdivision $K$ as $C$ might have infinitely many polytopes. By Lemma \ref{locallyflatlemma} then, there exists $s\in \N$ so that $\beta^s K_1$ and $\beta^s K_2$ are bistellar equivalent via $\beta^s K$ by bistellar moves which leave $\beta^s L$ fixed. In the cusped situation we can take $L$ as the set of ideal vertices.
\end{proof}

\begin{acknowledgements}
The first author was supported by the MATRICS grant of Science and Engineering Research Board, GoI and the second author was supported by National Board of Higher Mathematics, GoI.
\end{acknowledgements}

\bibliographystyle{amsplain}

\end{document}